\theoremstyle{plain}      
\newtheorem{thm}{Theorem}[section]     
\newtheorem{theorem}[thm]{Theorem}
\newtheorem{lemma}[thm]{Lemma}     
\newtheorem{proposition}[thm]{Proposition}     
\theoremstyle{remark}
\theoremstyle{definition}      
\newtheorem{definition}[thm]{Definition}     
\DeclareMathAlphabet{\doba}{U}{msb}{m}{n}
\def\Arg{{\mathop{\rm Arg}}}
\def\Log{{\mathop{\rm Log}}}
\newcommand{\Ree}{\mathop{\rm Re}\nolimits}
\newcommand{\definedas}{\mathrel{\raise.095ex\hbox{\rm :}\mkern-5.2mu=}}
\begin{document}     



\title[Generalized  logarithmic Gauss map]{Generalized  logarithmic Gauss map and its relation to (co)amoebas}
 
\author{Farid Madani} 
\address{Fakult\"at f\"ur  Mathematik \\ 
Universit\"at Regensburg \\
93040 Regensburg \\  
Germany}
\email{farid.madani@mathematik.uni-regensburg.de}

\author{Mounir Nisse} 
\address{Department of Mathematics, Texas A\&M University, College Station, TX 77843-3368, USA.}
\email{nisse@math.tamu.edu}
\thanks{Research of  the second author is partially supported by NSF MCS grant DMS-0915245.}
\begin{abstract}
We define the generalized logarithmic Gauss map for algebraic varieties of the complex algebraic torus of any codimension. Moreover,   we describe the set of  critical points of the logarithmic mapping restricted to our variety,  and we show  an analogous of Mikhalkin's result on the critical points of the logarithmic map  restricted to a  hypersurfaces.

\end{abstract}

\subjclass[2000]{}

\date{\today}

\keywords{} 

\maketitle


\section{Introduction} 

The Gauss map for hypersurfaces in the complex algebraic torus $(\mathbb{C}^*)^n$ is called  {\em the  logarithmic Gauss map}. The set of its  critical points  of the logarithmic map restricted to an algebraic variety  in the complex torus plays  an important role in real algebraic geometry. Indeed,  in the case of hypersurfaces, it contains the real part of the hypersurface itself. Moreover, if the hypersurface is real then they coincide (see \cite{M1-00} or \cite{M2-02} for more details).   We define the generalized logarithmic Gauss map for varieties in the complex algebraic torus of any codimension, and we describe  the set of  critical points of the logarithmic map restricted to our variety. Mikhalkin shows that the set of critical points of the logarithmic   map  in the case of hypersurface, coincide with the inverse image  the logarithmic Gauss map of the real projective space $\mathbb{P}\mathbb{R}^{n-1}$ . We show the analogous of Mikhalkin's result for algebraic  varieties of higher codimension.

\section{Preliminaries}

Let us  define  the logarithmic Gauss map of a hypersurface in the complex
algebraic torus (see Kapranov  \cite{K1-91} for more details).
\begin{definition}
Let $(\mathbb{C}^*)^n$ be the algebraic torus and $V\subset
(\mathbb{C}^*)^n$ be an algebraic hypersurface. For each
$z\in(\mathbb{C}^*)^n$ let $l_z :  h\longmapsto z.h$ be the translation
by $z$. The {\em Gauss map} of the hypersurface $V$ is the rational map
$\gamma : V \longrightarrow \mathbb{CP}^{n-1}$ taking a smooth point
$z\in V$ to the hyperplane $d(l_{z^{-1}})(T_zV)\subset
T_1((\mathbb{C}^*)^n)$, where $ \mathbb{CP}^{n-1} =
\mathbb{P}(T_1((\mathbb{C}^*)^n))$.
\end{definition}

\vspace{0.2cm}

 Let $\gamma$ be the map defined as follows
$$
\gamma  : V \longrightarrow  \mathbb{CP}^{n-1},
$$

$$
z \longmapsto T_zV\subset
T_z((\mathbb{C}^*)^n)\stackrel{t^*_{z^{-1}}}{\longrightarrow}
t^*_{z^{-1}}(T_zV)\subset T_1((\mathbb{C}^*)^n \longrightarrow   \mathbb{CP}^{n-1}.
$$
Hence, $\gamma$ is defined as  follows
$$
 V_{reg}  \ni z \longmapsto   \gamma (z)  =  [t^*_{z^{-1}}(T_zV)] \in  \mathbb{CP}^{n-1},
$$

\noindent where $[t^*_{z^{-1}}(T_zV)]$ is the class of $t^*_{z^{-1}}(T_zV)$ in $
\mathbb{CP}^{n-1}$. The map $\gamma$ is called {\em the logarithmic Gauss map}.

We suppose that $V\subset (\mathbb{C}^*)^n$ is  a complex algebraic hypersurface
defined by a polynomial $f$ and nowhere singular. Let $z\in V$,
$U\subset (\mathbb{C}^*)^n$ be a small open neighborhood of $z$ and $\mathscr{L}og_U :
U\longrightarrow \mathbb{C}^n$ be a choice of a  branch of   the holomorphic logarithm function
 and then we take the image of $U\cap V$ by $\mathscr{L}og_U$. Hence, the
vector $v (z)\in \mathbb{C}^n$ (viewed as
$T_{\mathscr{L}og_U(z)}(\mathbb{C}^n$) tangent to $\mathscr{L}og_U(U\cap V)$ at the point
  $\mathscr{L}og_U(z)$ is a complex vector, which is independent of the choice
  of the holomorphic branch. So, it defines a map from $U\cap V$ (a
  rational map from  all $V$) to $\mathbb{CP}^{n-1}$, called the
  {\em logarithmic Gauss map} of the hypersurface $V$. Explicitly one has:

\begin{eqnarray*}
\gamma :&V&\longrightarrow\,\,\,\mathbb{CP}^{n-1}\\
&z&\longmapsto\,\,\, \gamma (z) =  [v (z)],
\end{eqnarray*}

\noindent where $[v (z)]$ denotes the class of the vector $v
(z)$ in $\mathbb{CP}^{n-1}$. We decompose the identity as follow:
$$
U\cap
V\stackrel{\mathscr{L}og_U}{\longrightarrow}\mathbb{C}^n\stackrel{\exp}{\longrightarrow}(\mathbb{C}^*)^n\stackrel{f}{\longrightarrow}\mathbb{C}.
$$
Let $\Log$ be the logarithmic mapping defined as follows
$$\
Log (z_1,\ldots ,z_n) =
(\log |z_1|, \ldots ,\log |z_n|).
$$
Now, let us consider the amoeba $\mathscr{A}$ of the curve $V$ which the image of $V$ under the logarithmic mapping.
More precisely, we  consider  $\Log (U\cap V)$.
It is  clear that $\Log
(U\cap V) = \Ree \circ \mathscr{L}og_U (U\cap V)$ where $\Ree (z_1,\ldots ,z_n) =
(\Ree (z_1),\ldots , \Ree (z_n))$, and $\Ree$ denotes the real part. So, if
$t=\mathscr{L}og_U(u)$ where $u = (u_1,\ldots , u_n)\in U$, then $dt = \frac{du}{u}$ which is  $(dt_1, \ldots ,dt_n)$, and we get:
\begin{eqnarray*}
v (z)& =& \frac{df(\exp (t))}{dt}_{\mid t=\mathscr{L}og_U(z)}\\
&=&(u_1\frac{\partial f}{\partial u_1}(u), \ldots , u_n\frac{\partial f}{\partial u_n}(u))_{\mid u=z}\\
&=&(z_1\frac{\partial f}{\partial z_1}(z),\ldots ,  z_n\frac{\partial f}{\partial z_n}(z))
\end{eqnarray*}
\noindent Hence, a point $z\in V$ is a critical point of the function
$\Log = \Ree\circ\mathscr{L}og_U$ is equivalent to say that the tangent space
of $\mathscr{L}og_U (U\cap V)$ at the point $\mathscr{L}og_U (z)$ contains at least $n-1$ non zero purely 
imaginary  linearly independent  vectors $v_i$, which means that $T_{\mathscr{L}og_U (z)}(\mathscr{L}og_U (U\cap
V))$ is the complexification of a real hyperplane, and then it is invariant under complex conjugation so
 $[v (z)] = \gamma (z) \in
\mathbb{RP}^{n-1}\subset \mathbb{CP}^{n-1}$.

\vspace{0.3cm}

\section{Generalized  logarithmic Gauss map}

We generalize the logarithmic Gauss map for a complex algebraic varieties of any codimension.
We denote by $\mathbb{G}_{r,\, n}$ the  Grassmannian of all complex $r$-dimentional planes through the origin in $\mathbb{C}^n$ 
(i.e., the complex Grassmann manifold).
Let $V\subset (\mathbb{C}^*)^n$ be an algebraic variety of dimension $k$ with defining ideal $\mathcal{I}(V)$ 
generated by $\{ f_1,\ldots ,f_l\}$. We define a holomorphic map $\gamma_G$ from the set of smooth points of $V$ to
 the complex Grassmannian $\mathbb{G}_{n-k,\, n}$ which is a generalization of the logarithmic Gauss map in case of hypersurfaces.
 We denote by $V_{reg}$ the subset of smooth points of $V$, and $M(l\times n)$ be the set of $l\times n$ matrices. Let 
$g_G$ be the following map:
\[
\begin{array}{ccccl}
g_G&:&V_{reg}&\longrightarrow&M(l\times n)\\
&&z=(z_1,\ldots ,z_n)&\longmapsto&
\left( \begin{array}{ccc} z_1\frac{\partial f_1}{\partial z_1}(z) & \ldots & z_n\frac{\partial f_1}{\partial z_n}(z) \\ 
\vdots & \vdots& \vdots \\ 
 z_1\frac{\partial f_l}{\partial z_1}(z)& \ldots &  z_n\frac{\partial f_l}{\partial z_n}(z)
 \end{array} \right) .
\end{array}
\]
 Since $z$ is a smooth point of $V$, it is clear that the complex vector space $L_z$ generated by the rows of  the matrix $g_G(z)$ is of dimension $n-k$,
 and orthogonal to the tangent space to $V$ at $z$. Indeed, locally  $V$ is a complete intersection in an open neighborhood $U$  containing $z$.
 In other words, the image of $V_{reg}$ by $g_G$ is contained in the subvariety of 
$M(l\times n)$ consisting of $l\times n$ matrices of rank $n-k$, which we identify to the complex Grassmannian $G_{\mathbb{C}}(n-k, n)$. Composing this identification with $g_G$
  we obtain a map :
  $$
  \gamma_G : V_{reg}\rightarrow \mathbb{G}_{n-k,\, n}
  $$ 
  which we call the {\em generalized logarithmic Gauss map}.
  
  \vspace{0.2cm}

If $V\subset (\mathbb{C}^*)^n$ is a hypersurface,  Mikhalkin proves that the set of critical points of $\Log_{|V}$ coincides with 
$\gamma_G (\mathbb{R}P^{n-1})$ (see Lemma 3. \cite{M1-00}). In this section we generalize this fact for varieties of any codimension.
Let $z\in V_{reg}$ and $\mathscr{L}og$ be a branch of a holomorphic logarithm:
$$
(z_1,\ldots ,z_n)\mapsto (\log (z_1),\ldots , \log (z_n))
$$ 
defined in a neighborhood of $z$. 

\vspace{0.2cm}

\begin{lemma}
 With the above  notations, a point $z\in V_{reg}$ is critical for $\Log_{| V}$ if and only if the image under $d\mathscr{L}og$  of the tangent space $T_zV$ to $V$ at $z$   contains at least $s$ linearly independent purely imaginary vectors with $s = \max \{ 1, 2k-n+1\}$.
\end{lemma}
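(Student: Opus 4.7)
The plan is to reduce the statement to linear algebra using the factorization $\Log = \Ree \circ \mathscr{L}og$ recalled in the preliminaries. Since $\mathscr{L}og$ is a local biholomorphism near $z$, the differential $d\mathscr{L}og_z$ restricts to a $\mathbb{C}$-linear isomorphism from $T_zV$ onto a complex $k$-dimensional subspace $W \subset T_{\mathscr{L}og(z)}\mathbb{C}^n \cong \mathbb{C}^n$. Hence $d(\Log_{|V})_z$ and the $\mathbb{R}$-linear map $\Ree|_W : W \to \mathbb{R}^n$ have the same rank, and $z$ is critical for $\Log_{|V}$ precisely when this common rank drops below its maximal possible value $\min(2k, n)$.

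To control that rank, I would examine the kernel. Since $\ker \Ree = i\mathbb{R}^n$, one has $\ker(\Ree|_W) = W \cap i\mathbb{R}^n$, which is exactly the real vector space of purely imaginary vectors lying in $W$. Writing $s_0 := \dim_\mathbb{R}(W \cap i\mathbb{R}^n)$ and applying rank-nullity over $\mathbb{R}$ to $\Ree|_W$, we obtain $\dim_\mathbb{R} \Ree(W) = 2k - s_0$.

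The criticality condition $2k - s_0 < \min(2k, n)$ then splits according to codimension. If $2k \leq n$ it reduces to $s_0 \geq 1$; if $2k > n$ it reduces to $s_0 \geq 2k - n + 1$. Both are captured uniformly by $s_0 \geq \max\{1,\, 2k-n+1\} = s$, which is the conclusion of the lemma. As a consistency check, in the hypersurface case $k = n-1$ one recovers $s = n-1$, forcing $W$ to be the complexification of a real hyperplane in $\mathbb{R}^n$, in agreement with Mikhalkin's statement discussed in the preliminaries.

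The argument is essentially routine linear algebra and I do not anticipate a genuine obstacle. The only point requiring care is the case distinction on whether $2k \leq n$ or $2k > n$: it is precisely this split between target-dimension and source-dimension regimes that produces the threshold $s$ as a maximum of two quantities rather than a single fixed number, and this is really the content of the generalization from hypersurfaces to arbitrary codimension.
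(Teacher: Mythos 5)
Your proof is correct and follows essentially the same route as the paper: both reduce criticality of $\Log_{|V}=\Ree\circ\mathscr{L}og_{|V}$ to the failure of the real-part projection on $W=d\mathscr{L}og(T_zV)$ to have maximal rank, identify its kernel with $W\cap\sqrt{-1}\,\mathbb{R}^n$, and extract the threshold $s=\max\{1,2k-n+1\}$ from the same dimension count (the paper writes it as $s+2(k-s)\leq\min\{n-1,2k-1\}$, phrased via non-transversality of $V$ with the $(S^1)^n$-orbits, whose tangent spaces $d\mathscr{L}og$ sends to translates of $\sqrt{-1}\,\mathbb{R}^n$). Your rank--nullity formulation is, if anything, a slightly more explicit rendering of the same argument.
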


\begin{proof}
 A point $z\in V_{reg}$ is critical for $\Log_{| V}$ is  equivalent to the fact that $V$ and the orbit of the real torus
 $(S^1)^n$ are  not transversal at $z$. Indeed, $\mathscr{L}og$ takes the tangent space to an orbit of $(S^1)^n$ to a translate of 
$\sqrt{-1}\mathbb{R}^n$ in $\mathbb{C}^n$. Therefore, $z$ is critical for $\Log_{| V}$ is equivalent to say that 
$d\mathscr{L}og (T_zV)$ contains at least $s$ linearly independent purely imaginary vectors with $s+ 2(k-s)\leq \min \{ n-1, 2k-1\}$.
 If $Re$ denotes the real part of $\mathbb{C}^n$ coordinatewise, this is equivalent to the fact that
 the map $Re$ from $\mathscr{L}og (V)$ to $\mathbb{R}^n$ is not a submersion. Hence, $s$ should be at least $\max \{ 1, 2k-n+1\}$.
\end{proof}

Let $F = \mathbb{R}^n$ be the real part of  $\mathbb{C}^n$, and  $\sigma_m$  be the Schubert cell:
$$
\sigma_m := \{ E\in \mathbb{G}_{n-k,\, n} \, |\, \dim (E\cap F) = m\} .
$$
The dimension of $\sigma_m$ is equal to the dimension of the real Grassmannian $G(m,n)$ plus the real dimension of the complex 
Grassmann manifold $\mathbb{G}_{k-m,\, n-m}$ which is equal to $m(n-m) + 2(k-m)(n-m)$.

\vspace{0.2cm}
 
\begin{theorem}\label{main theorem}
 Let $V\subset (\mathbb{C}^*)^n$ be an algebraic variety of dimension $k$, and $\textrm{Critp}(\Log_{| V})$ be the set
 of critical points of the restriction of the logarithmic mapping to $V$. Then we have the following:
\begin{itemize}
\item[(i)]\, If $n\geq 2k$, then
$$
\textrm{Critp}(\Log_{| V}) = \bigcup_{j=1}^k\gamma_G^{-1}(\sigma_{n-2k+j});
$$
\item[(ii)]\, If $n < 2k$, then
$$
\textrm{Critp}(\Log_{| V}) = \bigcup_{j=1}^{n-k}\gamma_G^{-1}(\sigma_{j}).
$$
\end{itemize}
\end{theorem}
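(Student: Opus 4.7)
The plan is to convert the analytic criticality condition from the lemma into a Schubert condition on $L_z:=\gamma_G(z)$, and then reconcile the two indexings. First I would record the linear-algebraic link between $L_z$ and $T:=d\mathscr{L}og(T_zV)$: since the rows of the matrix $g_G(z)$ are precisely the gradients of the defining functions $f_i\circ\exp$ in log coordinates $w=\log z$, $L_z$ is the orthogonal complement of $T$ in $\mathbb{C}^n$ with respect to the symmetric bilinear form $B(u,v)=\sum_j u_jv_j$.

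The key step is the real-dimension identity
\[
\dim_{\mathbb{R}}(L_z\cap F)\;=\;(n-2k)+\dim_{\mathbb{R}}(T\cap iF),
\]
where $F=\mathbb{R}^n$. I would prove it as follows. Multiplication by $i$ is an $\mathbb{R}$-linear isomorphism $F\to iF$; since $T$ is $\mathbb{C}$-invariant it restricts to an isomorphism $T\cap F\to T\cap iF$, so these real subspaces share a common real dimension, call it $m'$. The real-part projection $\pi:T\to\mathbb{R}^n$, $v\mapsto\Ree(v)$, has kernel $T\cap iF$, hence $\pi(T)$ has real dimension $2k-m'$. Splitting $t=t_1+it_2\in T$ with $t_1,t_2\in\mathbb{R}^n$, a real vector $v$ satisfies $B(v,t)=0$ iff $v\cdot t_1=0$ and $v\cdot t_2=0$ in the Euclidean sense; but $\mathbb{C}$-invariance of $T$ forces the set of imaginary parts to equal $\pi(T)$, so the condition $B(v,t)=0$ for all $t\in T$ reduces to $v\perp\pi(T)$. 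Hence $L_z\cap F=\pi(T)^{\perp}\subset\mathbb{R}^n$ has real dimension $n-(2k-m')$, proving the identity.

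With the identity at hand, the theorem follows by substitution into the lemma. Writing $m:=\dim_{\mathbb{R}}(L_z\cap F)=n-2k+m'$, and noting that $m'$ ranges in $\{\max(0,2k-n),\ldots,k\}$---the upper bound because $T\cap F$ and $i(T\cap F)$ lie in direct sum inside the complex $k$-dimensional space $T$---the admissible values of $m$ are $\{\max(0,n-2k),\ldots,n-k\}$. When $n\ge 2k$, the criticality condition $m'\ge 1$ reads $m\ge n-2k+1$, producing $\bigcup_{j=1}^{k}\gamma_G^{-1}(\sigma_{n-2k+j})$; when $n<2k$, the condition $m'\ge 2k-n+1$ reads $m\ge 1$, producing $\bigcup_{j=1}^{n-k}\gamma_G^{-1}(\sigma_j)$.

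The main obstacle is the dimension identity itself: one must separate real from complex dimensions cleanly, remember that $L_z$ is conormal to $T$ with respect to the holomorphic bilinear form $B$ rather than the Hermitian inner product, and exploit $\mathbb{C}$-invariance of $T$ to identify real parts with imaginary parts. Once the identity is in place, both cases are a routine bookkeeping step matching the generic (non-critical) intersection dimensions $\max(0,2k-n)$ of $T\cap iF$ and $\max(0,n-2k)$ of $L_z\cap F$ with the single Schubert stratum excluded from the union.
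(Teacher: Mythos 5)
Your proposal is correct, and its core idea is the same as the paper's: dualize the criticality condition of the lemma (existence of extra purely imaginary vectors in $T=d\mathscr{L}og(T_zV)$) into a condition on real vectors in the conormal space $\gamma_G(z)$, then match indices against the Schubert cells $\sigma_m$. The difference is in how that dualization is carried out, and your version is the stronger one. The paper argues: if $T_zV$ contains a $j$-dimensional space of purely imaginary vectors, then $\gamma_G(z)$ sits inside a conjugation-invariant $\mathbb{C}^{n-j}$, so the \emph{expected} dimension of $\gamma_G(z)\cap\overline{\gamma_G(z)}$ is $2(n-k)-(n-j)=n-2k+j$. This is a genericity count, it only bounds the intersection from below, and as written it only gives the inclusion of the critical set into the union of Schubert preimages; the converse inclusion is not addressed. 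You instead prove the exact identity $\dim_{\mathbb{R}}(L_z\cap F)=(n-2k)+\dim_{\mathbb{R}}(T\cap iF)$ by exploiting that $L_z$ is the orthogonal complement of $T$ for the symmetric form $B(u,v)=\sum_j u_jv_j$ and that the real-part projection $\pi\colon T\to\mathbb{R}^n$ has kernel $T\cap iF$ and image $\pi(T)$ with $L_z\cap F=\pi(T)^{\perp}$. Since this is an equality of dimensions valid at every smooth point, it yields both inclusions simultaneously and makes the case split $n\ge 2k$ versus $n<2k$ pure bookkeeping, exactly as you describe. So your route is a rigorous completion of the paper's sketch rather than a departure from it; the one point worth stating explicitly in a final write-up is the identification $\dim_{\mathbb{R}}(E\cap F)=\dim_{\mathbb{C}}(E\cap\overline{E})$ for a complex subspace $E$, which reconciles your real-dimension count with the paper's phrasing in terms of $T_z^{\perp}V\cap\overline{T_z^{\perp}V}$.
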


\begin{proof}
First of all, the statements of Theorem  \ref{main theorem} are  also valid for the set of critical points of the argument map $\textrm{Critp}(\Arg_{| V})$ because the set of critical points of $\textrm{Critp}(\Log_{| V})$ and $\textrm{Critp}(\Arg_{| V})$ are the same (see for example  Corollary 3.1 in \cite{MN-11}).

Let $z$ be a critical point of $\Log_{| V}$, and $n\geq 2k$. Then, the tangent space to $V$ at $z$ contains at least $j$ purely 
imaginary  tangent vector with $1\leq j\leq k$. This means that $T_zV$ contains a real subspace $L_z$ of dimension at least $j$. So,
 the normal space $L_z^{\perp}$ to $L_z$ which is  $\mathbb{C}^{n-j}$ contains the normal space $\gamma_G(z) = T_z^{\perp}V$ to $T_zV$.
  Hence the expected dimension of $T_z^{\perp}V\cap \overline{T_z^{\perp}V}$ is $2(n-k)-(n-j) = n-2k+j$, and the first statement of 
 the theorem is done. In the case where $n<2k$, then $T_zV$ contains at least $2k-n+j$ purely imaginary linearly independent  tangent vectors with
 $1\leq j\leq n-k$. This means that the tangent space to $V$ at $z$ contains a real subspace $L_z$ of dimension at least $2k-n+j$,
 and its normal is $(\mathbb{C}^*)^{n-(2k-n+j)}$. Hence, the expected dimension of $T_z^{\perp}V\cap \overline{T_z^{\perp}V}$
 is $2(n-k) - (n-(2k-n+j)) = j$, and the second part of the theorem is done.
\end{proof} 

\begin{proposition}
Let $\mathscr{P}(k)\subset (\mathbb{C}^*)^n$ be a generic $k$-dimensional affine linear space with $n\geq 2k$. Suppose that the complex dimension of $\mathscr{P}(k)\cap \overline{\mathscr{P}(k)}$ is equal to $l$, with $0\leq l\leq k$. Then, for any regular value $x$ in the amoeba $\mathscr{A}(\mathscr{P}(k))$,  the cardinality of $\Log^{-1}(x)$ is at least $2^l$.
\end{proposition}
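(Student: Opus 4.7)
The plan is to exploit the real structure carried by $M:=\mathscr{P}(k)\cap\overline{\mathscr{P}(k)}$ in order to produce $2^{l}$ distinct preimages of $x$ in $\mathscr{P}(k)$. Since $M$ equals its own complex conjugate, it is defined over $\mathbb{R}$ and hence has a non-empty real locus $M_{\mathbb{R}}$ of real dimension $l$. I would fix a real base point $p\in M_{\mathbb{R}}$, pick a real basis $e_{1},\dots,e_{l}$ of the direction subspace of $M_{\mathbb{R}}$ in $\mathbb{R}^{n}$, and complete it to a complex basis of the directing space $V$ of $\mathscr{P}(k)$ by vectors $f_{1},\dots,f_{k-l}$ spanning a complex complement of $V\cap\bar V$. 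This yields a unique parametrization $z=p+\sum_{i=1}^{l}u_{i}e_{i}+\sum_{j=1}^{k-l}w_{j}f_{j}$ of $\mathscr{P}(k)$ by $(u,w)\in\mathbb{C}^{l}\times\mathbb{C}^{k-l}$.

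Starting from any preimage $z^{0}$ of $x$, with parameters $(u^{0},w^{0})$, the next step is to construct for each sign vector $\varepsilon\in\{\pm 1\}^{l}$ a preimage $z^{\varepsilon}$ of $x$ lying in $\mathscr{P}(k)$ by applying a conjugation-like reflection on those $u_{i}$ with $\varepsilon_{i}=-1$, together with a compensating adjustment of the transverse coordinates $w$ chosen so that each modulus $|z_{k}^{\varepsilon}|$ remains $e^{x_{k}}$. The existence of the compensation should come from the implicit function theorem: the hypothesis $n\geq 2k$ supplies enough room in the ambient torus to absorb the perturbation, while the assumption that $x$ is a regular value of $\Log_{|\mathscr{P}(k)}$ guarantees non-degeneracy of the Jacobian of the modulus equations at $z^{0}$. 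The resulting assignment $\varepsilon\mapsto z^{\varepsilon}$ is a $(\mathbb{Z}/2)^{l}$-action on the finite fibre $\Log^{-1}(x)\cap\mathscr{P}(k)$; pairwise distinctness of the $2^{l}$ preimages then follows from the $\mathbb{R}$-linear independence of $e_{1},\dots,e_{l}$ and the fact that a regular fibre consists of isolated simple points, so no nontrivial element of $(\mathbb{Z}/2)^{l}$ can fix any $z^{\varepsilon}$.

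The principal obstacle lies in the construction of the symmetry: the naive reflection $u_{i}\mapsto\bar u_{i}$ with $w_{j}$ held fixed does \emph{not} preserve $|z_{k}|$, because the imaginary parts of the $e_{i}$- and $f_{j}$-contributions to each coordinate $z_{k}$ interact. Producing the necessary compensation in the $w$-direction is where the hypothesis $\dim_{\mathbb{C}}M=l$ enters essentially: each of the $l$ independent real directions inside $V\cap\bar V$ gives rise to an independent compensation problem, and the fact that there are exactly $l$ such directions is precisely what allows one to set up and solve them to obtain the $(\mathbb{Z}/2)^{l}$-symmetry of the fibre. The genericity of $\mathscr{P}(k)$, together with $n\geq 2k$, should ensure that this system is non-degenerate and that the $2^{l}$ solutions are actually attained in $\Log^{-1}(x)\cap\mathscr{P}(k)$.
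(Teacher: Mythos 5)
Your guiding intuition --- that the $l$ independent real directions inside $\mathscr{P}(k)\cap\overline{\mathscr{P}(k)}$ should each contribute a factor of $2$ to the fibre --- is the right one, but the mechanism you propose does not work, and you have in effect flagged the fatal step yourself as ``the principal obstacle'' without resolving it. Over a regular value $x$ the fibre $\Log^{-1}(x)\cap\mathscr{P}(k)$ is a discrete (indeed finite) set, since $\dim_{\mathbb{R}}\mathscr{P}(k)=2k\le n$; so there is no room to ``adjust'' a point continuously and land back in the same fibre. The implicit function theorem lets you follow a fibre point as $x$ varies; it cannot manufacture a second point over the same $x$. Concretely, after reflecting some of the $u_i$ the point $z^{\varepsilon}$ satisfies none of the modulus equations, and the compensation you ask for is a system of $n$ real equations $|z_m(u^{\varepsilon},w)|=e^{x_m}$, $m=1,\dots,n$, in the $2(k-l)$ real unknowns $\mathrm{Re}\,w_j,\ \mathrm{Im}\,w_j$; since $n\ge 2k>2(k-l)$ for $l\ge 1$, this system is overdetermined and generically has no solution at all. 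Asserting that for each sign vector there exists a fibre point with the prescribed reflected $u$-coordinates is essentially assuming the conclusion. Note also that the one honest symmetry available, global conjugation $z\mapsto\bar z$, preserves $\mathscr{P}(k)$ only when $l=k$; for $l<k$ it carries $\mathscr{P}(k)$ onto $\overline{\mathscr{P}(k)}\ne\mathscr{P}(k)$, so even a single involution of the fibre cannot be defined on all of $\mathscr{P}(k)$ without first isolating a conjugation-invariant factor.

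The paper sidesteps all of this by arguing multiplicatively rather than by building a $(\mathbb{Z}/2)^{l}$-action on a single fibre. It decomposes $\mathscr{P}(k)$ as a Cartesian product of a real $l$-dimensional affine linear space $\mathscr{P}(l)$ with a complex $(k-l)$-dimensional one (and, when $l=k$, splits off a real line $L$), so that $\Log$ restricted to $\mathscr{P}(k)$ factors through the product of the logarithmic maps on the factors and the fibre cardinality is at least the product of the fibre cardinalities. Induction on $k$ then reduces everything to the case of a line, where the $2$-to-$1$ covering of regular values is quoted from Remark 7.1 of \cite{MN-11}. If you want to salvage your approach you would first need such a product decomposition; once you have it, conjugating the real one-dimensional factors independently is exactly your $(\mathbb{Z}/2)^{l}$ of reflections --- but then no compensation in the $w$-directions is needed, because conjugation of a real factor preserves every coordinate modulus on the nose.
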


\begin{proof}
 If $\dim_{\mathbb{C}}(\mathscr{P}(k)\cap \overline{\mathscr{P}(k)}) = l$, then the intersection $\mathscr{P}(k)\cap \mathbb{R}^{2k}$ where  $\mathbb{R}^{2k}$  is the real part of  $\mathbb{C}^{2k}$ is an affine linear space  of real dimension $l$. We use now, induction on $k$.
 Suppose $k=1$, it is known that any regular value of $\mathscr{A}(\mathscr{P}(1))$ is covered twice by the logarithmic mapping (see Remark 7.1 in  \cite{MN-11}). If $k >1$, and $l < k$,  then $\mathscr{P}(k)$ contains a real  $l$-dimensional affine linear space  $\mathscr{P}(l)$, and  $\mathscr{P}(k)$ can be seen as the cartesian product of $\mathscr{P}(l)$ and some complex $(k-l)$-dimensional affine linear space $\mathscr{P}'$. Using induction, any regular value of $\mathscr{A}(\mathscr{P}(l))$ is covered at least $2^l$ by the logarithmic mapping. So, the  regular values of  $\mathscr{A}(\mathscr{P}(k))$ are  also covered at least $2^l$. If $l=k$, then $\mathscr{P}(k)$ contains a real  $(k-1)$-dimensional affine linear space  $\mathscr{P}(k-1)$, and a real line $L$ such that $\mathscr{P}(k)$ can be viewed  as the cartesian product of $\mathscr{P}(k-1)$  and $L$. Using induction, we obtain the result.
 \end{proof}
 
 \vspace{0.3cm}
 
 {\bf Question} Kapranov  classified algebraic hypersurfaces $V\subset (\mathbb{C}^*)^n$ such that $\gamma  : V \longrightarrow  \mathbb{CP}^{n-1}$ is a birational isomorphism \cite{K1-91}. He prove that such varieties are the reduced $A$-discriminant for some finite subset $A$ of $\mathbb{Z}^n$ such that $A$ affinely generates  over $\mathbb{Z}$ the lattice 
$\mathbb{Z}^n$. It was shown by M. A. Cueto and A. Dickenstein  \cite{CD-06} that this last assumption can be removed.
Let $V\subset (\mathbb{C}^*)^n$ be an irreducible algebraic variety of dimension $k$, and $\gamma_G  : V \longrightarrow  \gamma_G(V)\subset \mathbb{G}_{n-k,\, n}$.

\vspace{0.3cm}

{\it Classify  the $k$-dimensional    algebraic varieties of the complex algebraic torus  such that $\gamma_G$ is a birational isomorphism. Is there a notion of a  multi-discriminant  i.e.,  an  $(A_1,\ldots , A_m)$-discriminant?} 

See \cite{GKZ-94} for more details about the $A$-discriminant.

\end{document}